\def\EuScript{\mathcal}
\def\mathscr{\EuScript}
\newcommand{\defegal}{\triangleq}                                   
\newcommand{\bbR}{\mathbb{R}}                               
\newcommand{\Bp}[1]{\Big(#1\Big)}                           
\newcommand{\fcara}[1]{\chi_{#1}}                           
\newcommand{\findi}[1]{\mathbf{1}_{#1}}                     
\newcommand{\tribu}[1]{\mathscr{#1}}                        
\newcommand{\omeg}{\Omega}                                  
\newcommand{\trib}{\tribu{A}}                               
\newcommand{\prbt}{\mathbb{P}}                              
\newcommand{\espe}{\mathbb{E}}                              
\newcommand{\va}[1]{\boldsymbol{\uppercase{#1}}}            
\newcommand{\Bigdelim}[1]{\Bp{#1}}                          
\newcommand{\vardelim}[1]{\left(#1\right)}                  
\newcommand{\Besp}[2][]{\espe_{#1}\Bigdelim{#2}}            
\newcommand{\proba}[2][]{\prbt_{#1}\vardelim{#2}}           
\newcommand{\esper}[2][]{\espe_{#1}\vardelim{#2}}           
\theoremstyle{plain}
\newtheorem{lemma}{Lemma}
\theoremstyle{definition}
\theoremstyle{remark}
\newtheorem{remark}{Remark}
\newtheorem{hyp}{Assumption}
\newtheorem{example}{Example}
\begin{document}

\title{Dynamic consistency for Stochastic Optimal Control problems}

\author[P. Carpentier]{Pierre Carpentier}
\address{P. Carpentier, ENSTA ParisTech,
         32, boulevard Victor, 75739 Paris Cedex 15, Fran\-ce.}
\email{pierre.carpentier@ensta.fr}

\author[J.-Ph. Chancelier]{Jean-Philippe Chancelier}
\address{J.-Ph. Chancelier, Universit\'{e} Paris-Est, CERMICS, \'{E}cole des Ponts ParisTech,
         6 \& 8 avenue Blaise Pascal, 77455 Marne-la-Vall\'{e}e Cedex 2.}
\email{jpc@cermics.enpc.fr}

\author[G. Cohen]{Guy Cohen}
\address{G. Cohen, Universit\'{e} Paris-Est, CERMICS, \'{E}cole des Ponts ParisTech,
         6 \& 8 avenue Blaise Pascal, 77455 Marne-la-Vall\'{e}e Cedex 2.}
\email{guy.cohen@mail.enpc.fr}

\author[M. De Lara]{Michel De Lara}
\address{M. De Lara, Universit\'{e} Paris-Est, CERMICS, \'{E}cole des Ponts ParisTech,
         6 \& 8 avenue Blaise Pascal, 77455 Marne-la-Vall\'{e}e Cedex 2.}
\email{delara@cermics.enpc.fr}

\author[P. Girardeau]{Pierre Girardeau}
\address{P. Girardeau, EDF R\&D, 1, avenue du G\'{e}n\'{e}ral de Gaulle,
         F-92141 Clamart Cedex, France,
         also with Universit\'{e} Paris-Est, CERMICS and ENSTA.}
\email{pierre.girardeau@cermics.enpc.fr}

\thanks{This study was made within the Systems and Optimization Working
Group (SOWG), which is composed of Laetitia Andrieu, Kengy Barty,
Pierre Carpentier, Jean-Philippe Chancelier, Guy Cohen, Anes
Dallagi, Michel De Lara and Pierre Girardeau, and based at
Universit\'{e} Paris-Est, CERMICS, Champs sur Marne, 77455 Marne la
Vall\'{e}e Cedex 2, France.}

\date{\today}

\keywords{Stochastic optimal control, Dynamic consistency, Dynamic Programming, Risk measures}
\subjclass[2000]{93E20, 49L20, 91B70}

\begin{abstract}
For a sequence of dynamic optimization problems, we aim at
discussing a notion of consistency over time. This notion can be
informally introduced as follows. At the very first time
step~$t_0$, the decision maker formulates an optimization problem
that yields optimal decision rules for all the forthcoming time
step~$t_0, t_1, \dots, T$; at the next time step~$t_1$, he is able
to formulate a new optimization problem starting at time~$t_1$
that yields a new sequence of optimal decision rules. This process
can be continued until final time~$T$ is reached. A family of
optimization problems formulated in this way is said to be time
consistent if the optimal strategies obtained when solving the
original problem remain optimal for all subsequent problems. The
notion of time consistency, well-known in the field of Economics,
has been recently introduced in the context of risk measures,
notably by~\citet{Artzner_AOR_2007} and studied in the Stochastic
Programming framework by~\citet{Shapiro_ORL_2009} and for Markov
Decision Processes (MDP) by~\citet{Ruszczynski_OO_2009}. We here
link this notion with the concept of ``state variable'' in MDP,
and show that a significant class of dynamic optimization problems
are dynamically consistent, provided that an adequate state
variable is chosen.
\end{abstract}

\maketitle

\section{Introduction}

Stochastic Optimal Control (SOC) is concerned with sequential
decision-making under uncertainty. Consider a dynamical process
that can be influenced by exogenous noises as well as decisions
one has to make at every time step. The decision maker wants to
optimize the behavior of the dynamical system (for instance,
minimize a production cost) over a certain time horizon. As the
system evolves, observations of the system are made; we here
suppose that the decision maker is able to keep in memory all the
past observations. Naturally, it is generally more profitable for
him to adapt its decisions to the observations he makes of the
system. He is hence looking for strategies rather than simple
decisions. In other words, he is looking for applications that map
every possible history of the observations to corresponding
decisions. Because the number of time steps may be large, the
representation of such an object is in general numerically
intractable.

However, an amount of information lighter than the whole history
of the system is often sufficient to make an optimal decision. In
the seminal work of~\citet{Bellman57}, the minimal information on
the system that is necessary to make the optimal decision plays a
crucial role; it is called the \emph{state
variable}~\citep[see][for a more formal definition]{Whittle}.
Moreover, the Dynamic Programming (DP) principle provides a way to
compute the optimal strategies when the state space dimension is
not too large~\citep[see][for a broad overview on
DP]{BertsekasDP}. The aim of this paper is to establish a link
between the concept of state variable and the notion of time
consistency\footnote{We either use the term ``dynamically
consistent'' or ``time consistent'' to refer to the same notion.}.

The notion of dynamic consistency is well-known in the field of
economics~\citep[see][]{Hammond:1989} and has been introduced in
the context of risk measures \citep[see][for definitions and
properties of coherent and consistent dynamic risk
measures]{Artzner_AOR_2007,Riedel_SPA_2004,Detlefsen_FS_2005,Cheridito_EJP_2006}.
Dynamic consistency has then been studied in the stochastic
programming framework by \citet{Shapiro_ORL_2009} and for Markov
Decision Processes by \citet{Ruszczynski_OO_2009}. In this paper,
we rather use the (almost equivalent) definition of time
consistency given by \cite{Ekeland_arXiv_2006}, which is more
intuitive and seems better suited in the framework of optimal
control problems. In this context, the property of time
consistency is loosely stated as follows. The decision maker
formulates an optimization problem at time~$t_0$ that yields a
sequence of optimal decision rules for~$t_0$ and for the following
time steps~$t_1, \dots, t_N=T$. Then, at the next time step~$t_1$,
he formulates a new problem starting at~$t_1$ that yields a new
sequence of optimal decision rules from time steps~$t_1$ to~$T$.
Suppose the process continues until time~$T$ is reached. The
sequence of optimization problems is said to be dynamically
consistent if the optimal strategies obtained when solving the
original problem at time~$t_{0}$ remain optimal for all subsequent
problems. In other words, time consistency means that strategies
obtained by solving the problem at the very first stage do not
have to be questioned later on.

The notion of information here plays a crucial role. Indeed, we
show in this paper that a sequence of problems may be consistent
for some information structure while inconsistent for a different
one. Consider for example a standard stochastic optimization
problem solvable using DP. We will observe that the sequence of
problems formulated after the original one at the later time steps
are time consistent. Add now a probabilistic constraint involving
the state at the final time~$T$. We will show that such a
constraint brings time inconsistency in the sense that optimal
strategies based on the usual state variable have to be
reconsidered at each time step. This is because, roughly speaking,
a probabilistic constraint involves not only the state variable
values but their probabilistic distributions. Hence the only
knowledge of the usual state variable of the system is
insufficient to write consistent problems at subsequent time
steps. So, in addition to the usual technical difficulties
regarding probabilistic constraints (mainly related to the
non-convexity of the feasible set of strategies), an additional
problem arises in the dynamic case. We will see that, in fact,
this new matter comes from the information on which the optimal
decision is based. Therefore, with a well-suited state variable,
the sequence of problems regains dynamic consistency.

In~\S\ref{sec:Det}, we carefully examine the notion of time
consistency in the context of a deterministic optimal control
problem. The main ideas of the paper are so explained and then
extended, in~\S\ref{sec:Sto}, to a sequence of SOC problems. Next,
in~\S\ref{sec:Constrained}, we show that simply adding a
probability constraint (or, equivalently in our context, an
expectation constraint) to the problem makes time consistency fall
apart, when using the original state variable. We then establish
that time consistency can be recovered provided an adequate state
variable is chosen. We conclude that, for a broad class of SOC
problems, time consistency has to be considered with respect to
the notion of a state variable and of DP.

\section{A first example} \label{sec:Det}

We introduce sequential deterministic optimal control problems,
indexed by time, and derive the notion of time consistency on this
instance. We then illustrate the fact that the decision making
process may be time consistent or not, depending on the
information on which decisions are based. The discussion is
informal, in the sense that we do not enter technical details
regarding existence of the solutions for the problems we
introduce.

Let us consider a discrete and finite time
horizon~$t_0,\dots,t_N=T$.\footnote{where~$t_i+1=t_{i+1}$} The
decision maker has to optimize (according to a cost function we
introduce below) the management of an amount of stock~$x_t$, which
lies in some space~$\mathcal{X}_t$, at every time step~$t=t_0,
\dots, T$. Let~$\mathcal{U}_t$ be some other space, for every time
step~$t=t_0, \dots, T-1$. At each time step~$t$, a decision~$u_t
\in \mathcal{U}_t$ has to be made. Then a cost~$L_t$ is incurred
by the system, depending on the values of the control and on the
auxiliary variable~$x_t$ that we call the state of the system.
This state variable is driven from time~$t$ to time~$t+1$ by some
dynamics~$f_t: \mathcal{X}_t \times \mathcal{U}_t \rightarrow
\mathcal{X}_{t+1}$. The aim of the decision maker is to minimize
the sum of the intermediate costs~$L_t$ at all time steps plus a
final cost~$K$.

The problem hence reads:
\begin{subequations} \label{eqn:P}
\begin{align}
\min_{x, u} \quad & \sum_{t=t_0}^{T-1} L_t\left(x_t, u_t\right)
    + K\left(x_T\right), \\
\intertext{subject to the initial condition:}
    & x_{t_0} \text{ given}, \\
\intertext{and dynamic constraints:}
    & x_{t+1} = f_t\left(x_t, u_t\right), \qquad \forall t=t_0, \dots, T-1.
\end{align}
\end{subequations}
Note that here the decision at time~$t$ is taken knowing the
current time step and the initial condition (the decision is
generally termed ``open loop''). A priori, there is no need for
more information since the model is deterministic.

Suppose a solution to this problem exists. This is a sequence of
controls that we denote by~$u_{t_0,t_0}^*, \dots, u_{t_0,T-1}^*$,
where the first index refers to the initial time step and the
second index refers to the time step for which the decision
applies. Moreover, we suppose a solution exists for each one of
the natural subsequent problems, i.e. for every~$t_i=t_1, \dots,
T-1$:
\begin{subequations} \label{eqn:Pti}
\begin{align}
\min_{x, u} \quad& \sum_{t=t_i}^{T-1} L_t\left(x_t, u_t\right) + K\left(x_T\right), \\
\text{s.t.} \quad& x_{t_i} \text{ given}, \\
& x_{t+1} = f_t\left(x_t, u_t\right), \qquad \forall t=t_i, \dots,
T-1.
\end{align}
\end{subequations}
We denote the solutions of these problems
by~$u_{t_i,t_i}^*,\dots,u_{t_i,T-1}^*$, for every time step~$t_i=t_1, \dots,
T-1$. Those notations however make implicit the fact that the
solutions do generally depend on the initial condition~$x_{t_i}$.
We now make a first observation.

\begin{lemma}[Independence of the initial condition] \label{ppty:Indpdt}
In the very particular case when the solution to
Problem~\eqref{eqn:P} and the solutions to
Problems~\eqref{eqn:Pti} for every time step~$t_i=t_1, \dots, T-1$
do not depend on the initial state conditions, problems are
dynamically consistent.
\end{lemma}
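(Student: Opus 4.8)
The plan is to combine the additively separable, sequential structure of the criterion in~\eqref{eqn:P} with the independence hypothesis. The core idea is Bellman's principle of optimality: the tail of a globally optimal trajectory must itself be optimal for the corresponding truncated problem. Once this is established, the hypothesis lets me identify that tail with the solution $u^*_{t_i,t_i},\dots,u^*_{t_i,T-1}$ computed afresh at time $t_i$, which is exactly what dynamic consistency requires.

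First I would fix $t_i\in\{t_1,\dots,T-1\}$, let $(u^*_{t_0,t})_{t=t_0}^{T-1}$ denote a solution of~\eqref{eqn:P}, and let $(x^*_t)_{t=t_0}^{T}$ be the state trajectory it generates from $x_{t_0}$ through the dynamics $f_t$. Splitting the sum at $t_i$ writes the total cost as a ``head'' $\sum_{t=t_0}^{t_i-1}L_t(x_t,u_t)$, which depends only on the controls $u_{t_0},\dots,u_{t_i-1}$, plus a ``tail'' $\sum_{t=t_i}^{T-1}L_t(x_t,u_t)+K(x_T)$, which is precisely the criterion of~\eqref{eqn:Pti}. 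Because the tail is coupled to the head solely through the value $x_{t_i}$, perturbing the tail controls alone leaves both the head cost and the reached state $x^*_{t_i}$ unchanged; minimality of the full sequence therefore forces the tail $(u^*_{t_0,t})_{t=t_i}^{T-1}$ to minimize the tail cost subject to the dynamics issued from the initial condition $x_{t_i}=x^*_{t_i}$. In other words, the tail of the global solution is an optimal solution of~\eqref{eqn:Pti} with initial state $x^*_{t_i}$.

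Next I would invoke the hypothesis. Since the solution $(u^*_{t_i,t})_{t=t_i}^{T-1}$ of~\eqref{eqn:Pti} does not depend on the initial condition, it is in particular optimal for the initial state $x^*_{t_i}$ actually reached along the global trajectory. Thus both $(u^*_{t_0,t})_{t\geq t_i}$ and $(u^*_{t_i,t})_{t\geq t_i}$ solve~\eqref{eqn:Pti} for the same initial condition $x^*_{t_i}$; appealing to uniqueness of the optimizer, they coincide, i.e.\ $u^*_{t_0,t}=u^*_{t_i,t}$ for every $t=t_i,\dots,T-1$. As $t_i$ was arbitrary, the strategy computed at the first stage remains optimal for each subsequent problem, which is the asserted dynamic consistency. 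The very same splitting argument, applied with~\eqref{eqn:Pti} in place of~\eqref{eqn:P}, also yields consistency between any two later problems.

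The step I expect to be the true crux is not the decomposition---Bellman's principle is immediate for an additive cost---but rather the bookkeeping around uniqueness and around which initial condition the re-solved problem carries. The independence hypothesis is exactly what removes the second difficulty: without it I would have to verify that the problem re-solved at time $t_i$ is posed with the reached state $x^*_{t_i}$, whereas independence makes the continuation $(u^*_{t_i,t})_{t\geq t_i}$ insensitive to that choice. Concerning the first difficulty, if optima are not unique the phrase ``the solution does not depend on the initial condition'' must be read as the existence of an optimal \emph{selection} that is constant in $x_{t_i}$, and the conclusion should then be stated up to the set of minimizers: the global tail is \emph{one} optimal solution of each truncated problem.
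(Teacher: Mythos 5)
Your proof is correct and follows essentially the same route as the paper's: the tail of the first-stage optimal sequence solves the truncated problem~\eqref{eqn:Pti} with initial condition $x^*_{t_0,t_i}$ (Bellman's principle, which the paper leaves implicit in its ``namely'' step), and the independence hypothesis then transfers this optimality to the problem posed with any initial condition. Your added care about uniqueness versus optimal selections is a sensible sharpening of a point the paper, working at an avowedly informal level, passes over silently.
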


\begin{proof}
Let us denote by~$x_{t_0, t_i}^*$ the optimal value of the state
variable within Problem~\eqref{eqn:P} at time~$t_i$. If we suppose
that solutions to Problems~\eqref{eqn:Pti} do not depend on the
initial condition, then they are the same as  the solutions
obtained with the initial condition~$x_{t_0, t_i}^*$, namely
$u_{t_0,t_i}^*, \dots, u_{t_0,T-1}^*$. In other words, the
sequence of decisions~$u_{t_0,t_0}^*$, $\dots$, $u_{t_0,T-1}^*$
remains optimal for the subsequent problems starting at a later
date.
\end{proof}

This property is of course not true in general, but we see in
Example~\ref{ex:Det} hereafter and in~\S\ref{sec:Sto} that some
very practical problems do have this surprising property.

\begin{example} \label{ex:Det}
Let us introduce, for every~$t=t_0, \dots, T-1$, functions~$l_t:
\mathcal{U}_t \rightarrow \mathbb{R}$ and~$f_t: \mathcal{U}_t
\rightarrow \mathbb{R}$, and assume that~$x_t$ is scalar. Let~$K$
be a scalar constant and consider the following deterministic
optimal control problem:
\begin{align*}
\min_{x, u} \quad& \sum_{t=t_0}^{T-1} l_t\left(u_t\right) x_t  +
K x_T, \\
\text{s.t.} \quad& x_{t_0} \text{ given}, \\
& x_{t+1} = f_t\left(u_t\right) x_t, \qquad \forall t=t_0, \dots,
T-1.
\end{align*}
Variables $x_t$ can be recursively replaced using dynamics~$f_t$.
Therefore, the above optimization problem can be written:
\begin{equation*}
\min_{u} \; \sum_{t=t_0}^{T-1} l_t\left(u_t\right)
f_{t-1}\left(u_{t-1}\right) \ldots f_{t_0}\left(u_{t_0}\right)
x_{t_0} + K f_{T-1}\left(u_{T-1}\right) \ldots f_{t_0}
\left(u_{t_0}\right) x_{t_0}.
\end{equation*}
Hence the optimal cost of the problem is linear with respect to
the initial condition~$x_{t_0}$. Suppose that~$x_{t_0}$ only takes
positive values. Then the value of~$x_{t_0}$ has no influence on
the minimizer (it only influences the optimal cost). The same
argument applies at subsequent time steps~$t_i>t_0$ provided that
dynamics are such that~$x_t$ remains positive for every time
step~$t=t_1, \dots, T$. Now, formulate the same problem at a later
date~$t_i=t_1, \dots, T-1$, with initial condition~$x_{t_i}$
given. By the same token as for the first stage problem, the value
of the initial condition~$x_{t_i}$ has no influence on the optimal
controls. Assumptions made in Lemma~\ref{ppty:Indpdt} are
fulfilled, so that the time consistency property holds true for
open-loop decisions without reference to initial state conditions.

Although, for the time being, this example may look very special,
we will see later on that it is analogous to familiar SOC
problems.
\end{example}

As already noticed, Lemma~\ref{ppty:Indpdt} is not true in
general. Moreover, the deterministic formulation~\eqref{eqn:P}
comes in general from the representation of a real-life process
which may indeed be subject to unmodelized disturbances. Think of
an industrial context, for example, in which sequential decisions
are taken in the following manner.
\begin{itemize}
    \item At time~$t_0$, Problem~\eqref{eqn:P} is solved. One obtains
        a decision~$u_{t_0, t_0}^*$ to apply at time~$t_0$, as well as
        decisions~$u_{t_0, t_1}^*$, $\dots$, $u_{t_0, T-1}^*$ for future
        time steps.
    \item At time~$t_1$, one formulates and solves the
        problem starting at time~$t_1$ with initial
        condition~$x_{t_1}=f_{t_0}(x_{t_0}, u_{t_0, t_0}^*) +
        \varepsilon_{t_1}$, $\varepsilon_{t_1}$ being some perturbation of
        the model. There is
        no reason not to use the observation of the actual value of the
        variable~$x_{t_1}$ at time~$t_1$ as long as we have it at our disposal.
    \item Hence a decision~$u_{t_1, t_1}^*$ is obtained, which is
        different from the initially obtained optimal decision~$u_{t_0,
        t_1}^*$ (once again, in general).
    \item The same process continues at times~$t_2,\ldots, T-1$.
\end{itemize}
Let us now state the two following lemmas.

\begin{lemma}[True deterministic world] \label{ppty:Det}
    If the deterministic model is actually exact, i.e. if all
    perturbations~$\varepsilon_{t_i}$ introduced above equal zero,
    then Problems~\eqref{eqn:Pti} with initial conditions~$x_{t_i} =
    x_{t_i}^* \defegal f_{t_i}(x_{t_{i-1}}^*, u_{t_0, t_{i-1}}^*)$ are
    dynamically consistent.
\end{lemma}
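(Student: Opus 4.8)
The plan is to recognize this statement as a manifestation of Bellman's principle of optimality and to establish it by a ``cut-and-paste'' interchange argument. The crucial role of the hypothesis $\varepsilon_{t_i}=0$ is that the state actually reached at time $t_i$ coincides with the optimal state $x_{t_i}^{*}$ planned at $t_0$; this is precisely the initial condition prescribed for Problem~\eqref{eqn:Pti} in the statement, so that the subsequent problems are genuinely posed along the optimal trajectory of the original one.

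First I would decompose the objective of Problem~\eqref{eqn:P}, evaluated along any admissible trajectory, into a \emph{head} and a \emph{tail} relative to the intermediate time $t_i$:
\begin{equation*}
\sum_{t=t_0}^{T-1} L_t(x_t,u_t) + K(x_T)
= \underbrace{\sum_{t=t_0}^{t_i-1} L_t(x_t,u_t)}_{\text{head}}
+ \underbrace{\Big( \sum_{t=t_i}^{T-1} L_t(x_t,u_t) + K(x_T) \Big)}_{\text{tail}} .
\end{equation*}
The head depends only on the controls $u_{t_0},\dots,u_{t_{i-1}}$ and on the given $x_{t_0}$, and through the dynamics it fixes the value of $x_{t_i}$. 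Once $x_{t_i}$ is frozen, the tail is exactly the cost functional of Problem~\eqref{eqn:Pti} with that initial condition.

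The core step is then the exchange argument. Along the optimal trajectory of~\eqref{eqn:P}, the head equals $\sum_{t=t_0}^{t_i-1} L_t(x_{t}^{*},u_{t_0,t}^{*})$ and reaches the state $x_{t_i}^{*}$. Suppose, for contradiction, that the truncated sequence $(u_{t_0,t_i}^{*},\dots,u_{t_0,T-1}^{*})$ were not optimal for Problem~\eqref{eqn:Pti} initialized at $x_{t_i}^{*}$: there would then exist an admissible tail achieving a strictly smaller value. Concatenating the \emph{unchanged} head $(u_{t_0,t_0}^{*},\dots,u_{t_0,t_{i-1}}^{*})$ with this strictly better tail would yield an admissible trajectory for~\eqref{eqn:P} whose total cost falls strictly below the optimum, a contradiction. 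Hence the original tail attains the optimal value of Problem~\eqref{eqn:Pti}, i.e. it is optimal for the subsequent problem, which is the asserted dynamic consistency.

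I do not expect a genuine technical obstacle here, since the result is nothing but the deterministic dynamic programming principle; the only point requiring care is conceptual. It is the observation that consistency is obtained \emph{only} because the subsequent problems are posed at the realized states $x_{t_i}^{*}$, which, under $\varepsilon_{t_i}=0$, coincide with the planned optimal states. Were the perturbations nonzero, Problem~\eqref{eqn:Pti} would be initialized away from $x_{t_i}^{*}$ and the interchange argument would no longer tie its solution to the tail of the original one. Note also that, since the statement only requires that the original strategy \emph{remain} optimal, it suffices that the original tail be a minimizer of the subsequent problem, not necessarily the unique one; the exchange argument delivers exactly this.
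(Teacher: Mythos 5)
Your proof is correct and takes essentially the same route as the paper: the paper's proof likewise fixes the optimal head controls (stepwise, for $t_1$ and then recursively), observes that the head cost becomes an additive constant so the residual problem has the same $\argmin$ as Problem~\eqref{eqn:Pti}, and concludes tail optimality from optimality for Problem~\eqref{eqn:P}. Your explicit cut-and-paste contradiction simply spells out the ``it follows that'' step the paper leaves implicit, and your one-shot treatment of a general $t_i$ versus the paper's recursion is an immaterial difference.
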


\begin{proof}
    Since decisions~$u_{t_0, t_0}^*, \dots, u_{t_0, T-1}^*$ are
    optimal for Problem~\eqref{eqn:P}, it follows that
    decisions $u_{t_0, t_1}^*, \dots, u_{t_0, T-1}^*$
    are optimal for the problem:
    \begin{align*}
        \min_{x, u} \quad&  L_t\left(x_{t_0}, u_{t_0, t_0}^*\right) +
            \sum_{t=t_1}^{T-1} L_t\left(x_t, u_t\right) + K\left(x_T\right), \\
        \text{s.t.} \quad& x_{t_1} = f_{t_1}(x_{t_0}, u_{t_0, t_0}^*), \\
        & x_{t+1} = f_t\left(x_t, u_t\right), \qquad \forall t=t_1, \dots,
        T-1,
    \end{align*}
    which has the same arg min as Problem~\eqref{eqn:Pti} at time~$t_1$. The
    same argument applies recursively for subsequent time steps.
\end{proof}

It is clear that Lemma~\ref{ppty:Det} is not satisfied in real
life. Therefore, adding disturbances to the problem seems to bring
inconsistency to the sequence of optimization problems. Decisions
that are optimal for the first stage problem do not remain optimal
for the subsequent problems if we do not let decisions depend on
the initial conditions.

In fact, as it is stated next, time consistency is recovered
provided we let decisions depend upon the right information.

\begin{lemma}[Right amount of information] \label{ppty:RightInfo}
Suppose that one is looking for strategies~$(\Phi_{t_0,t_0}^*$,
$\dots$, $\Phi_{t_0, T-1}^*)$ as feedback functions depending on
the variable~$x$. Then Problems~\eqref{eqn:Pti} are time
consistent for every time step~$t=t_0, \dots, T-1$.
\end{lemma}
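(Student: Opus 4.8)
The plan is to characterize optimal \emph{feedback} strategies through the Dynamic Programming principle, and then to observe that the feedback produced at any given time step is built only from the data attached to that step and to later ones; it is therefore insensitive to the time at which the problem was formulated, which is exactly what time consistency requires.

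First I would introduce, for each starting time~$t_i$, the Bellman (value) function~$V_{t_i} : \mathcal{X}_{t_i} \rightarrow \mathbb{R}$ defined as the optimal cost of Problem~\eqref{eqn:Pti} seen as a function of the initial condition:
\[
V_{t_i}(x) = \min \left\{ \sum_{t=t_i}^{T-1} L_t(x_t,u_t) + K(x_T) \ :\ x_{t_i}=x,\ x_{t+1}=f_t(x_t,u_t) \right\}.
\]
Backward induction yields~$V_T = K$ together with the recursion
\[
V_{t}(x) = \min_{u \in \mathcal{U}_t} \left\{ L_t(x,u) + V_{t+1}\bigl(f_t(x,u)\bigr) \right\},
\]
from which an optimal feedback at time~$t$ is read off as any selection
\[
\Phi_t^*(x) \in \argmin_{u \in \mathcal{U}_t} \left\{ L_t(x,u) + V_{t+1}\bigl(f_t(x,u)\bigr) \right\}.
\]

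The decisive remark is that~$V_{t+1}$, and hence the feedback map~$\Phi_t^*$, depends only on the data~$L_t, f_t, \dots, L_{T-1}, f_{T-1}, K$ pertaining to times~$t$ and later; it involves neither the time~$t_0$ at which the global problem was posed nor any past state value. Consequently, solving Problem~\eqref{eqn:P} at~$t_0$ and solving Problem~\eqref{eqn:Pti} at~$t_i$ produce, over the common horizon~$t = t_i, \dots, T-1$, the very same value functions and the very same optimal feedbacks, so that~$\Phi_{t_0,t}^* = \Phi_{t_i,t}^*$ for all~$t \geq t_i$. The strategies computed at the first stage thus remain optimal for every subsequent problem, which is precisely time consistency.

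The main point to secure is the existence of a well-defined minimizer~$\Phi_t^*$ in the Bellman equation, so that a genuine feedback selection is available; this is the only place where regularity of~$L_t$, $f_t$ and~$K$ enters, and in keeping with the informal tone of this section I would not dwell on it. Beyond that, the argument is structurally clean: because a feedback law prescribes a control for \emph{every} possible state, it automatically adapts to whatever value~$x_t$ is actually reached and, unlike the open-loop decisions of Lemma~\ref{ppty:Det}, need not be recomputed when the realized trajectory departs from the one anticipated at~$t_0$.
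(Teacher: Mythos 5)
Your proposal is correct and follows essentially the same route as the paper: the paper's proof is a one-line invocation of the Dynamic Programming principle, and your backward-induction construction of the value functions~$V_t$ and the feedback selection~$\Phi_t^*$ is precisely the content of that principle, made explicit. Your key observation --- that~$V_{t+1}$, hence~$\Phi_t^*$, depends only on the data~$L_t, f_t, \dots, K$ from time~$t$ onward, so that~$\Phi_{t_0,t}^* = \Phi_{t_i,t}^*$ --- is exactly why the cited principle delivers time consistency, and your deferral of minimizer-existence issues matches the paper's declared informal level of rigor.
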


\begin{proof}
    The result is a direct application of the DP principle, which states that
    there exists such a feedback
    function~$\Phi^*_{t_0, t_i}$
    that is optimal for Problem~\eqref{eqn:P} and is still optimal for
    Problem~\eqref{eqn:Pti} at time~$t_i$, whatever initial condition~$x_{t_i}$
    is.
\end{proof}

We thus retrieve the dynamic consistency property provided that we
use the feedback functions $\Phi_{t_0,t}^*$ rather than the
controls~$u_{t_{0},t}^*$. In other words, problems are dynamically
consistent as soon as the control strategy is based on a
sufficiently rich amount of information (time instant~$t$
\textbf{and} state variable~$x$ in the deterministic case).

There is of course an obvious link between these optimal
strategies and the controls
$(u_{t_{0},t_{0}}^*,\ldots,u_{t_{0},T-1}^*)$, namely:
\begin{align*}
u_{t_0,t}^* &= \Phi_{t_0,t}^*\left(x_{t_0,t}^*\right), \qquad
\forall t=t_0, \dots, T-1, \intertext{where}
x_{t_0,t_0}^* &= x_{t_0}, \\
x_{t_0,t+1}^* &= f_t\left(x_{t_0,t}^*,
\Phi_{t_0,t}^*\left(x_{t_0,t}^*\right)\right),
\qquad \forall t=t_0, \dots, T-1. \\
\end{align*}

The considerations we made so far seem to be somewhat trivial
However, we whall observe that for SOC problems, which may seem
more complicated at first sight, the same considerations remain
true. Most of the time, decision making processes are time
consistent, provided we choose the correct information on which
decisions are based.

\section{Stochastic optimal control without constraints} \label{sec:Sto}

We now consider a more general case in which a controlled
dynamical system is influenced by modeled exogenous disturbances.
The decision maker has to find strategies to drive the system so
as to minimize some objective function over a certain time
horizon. This is a sequential decision making process on which we
can state the question of dynamic consistency. As in the previous
example, the family of optimization problems is derived from the
original one by truncating the dynamics and the cost function (the
final time step~$T$ remains unchanged in each problem), and
strategies are defined relying on the same information structure
as in the original problem. In the sequel, random variables will
be denoted using bold letters.

\subsection{The classical case} \label{ssec:Classical}

Consider a dynamical system characterized by state\footnote{The
use of the terminology ``state'' is somewhat abusive until we make
Assumption~\ref{hyp:Markov}.} variables
$\va{x}=(\va{x}_t)_{t=t_0,\dots,T}$, where~$\va{x}_t$ takes values
in~$\mathcal{X}_t$. The system can be influenced by control
variables $\va{u}=(\va{u}_t)_{t=t_0,\dots,T-1}$ and by exogenous
noise variables $\va{w} = (\va{w}_t)_{t=t_0, \dots, T}$
($\va{u}_t$ and~$\va{w}_t$ taking values in~$\mathcal{U}_t$
and~$\mathcal{W}_t$ respectively). All random variables are
defined on a probability space~$(\omeg, \trib, \prbt)$. The
problem we consider consists in minimizing the expectation of a
sum of costs depending on the state, the control and the noise
variables over a discrete finite time horizon. The state variable
evolves with respect to some dynamics that depend on the current
state, noise and control values. The problem starting at~$t_0$
writes:\footnote{We here use the notations~$\sim$ for ``is
distributed according to'' and~$\preceq$ for ``is measurable with
respect to''.}
\begin{subequations} \label{eqn:Psto}
\begin{align}
\min_{\va{x}, \va{u}} \quad& \esper{\sum_{t=t_0}^{T-1}
L_t\left(\va{x}_t, \va{u}_t, \va{w}_{t+1}\right) +
K\left(\va{x}_T\right)}, \\
\text{s.t.} \quad& \va{x}_{t_0} \text{ given}, \\
& \va{x}_{t+1} = f_t\left(\va{x}_t, \va{u}_t, \va{w}_{t+1}\right),
\qquad \forall t=t_0, \dots, T-1, \\
& \va{u}_t \preceq \va{x}_{t_0}, \va{w}_{t_1}, \dots, \va{w}_t,
\qquad \forall t = t_0, \dots, T-1.
\end{align}
\end{subequations}
Noises that affect the system can be correlated through time. A
general approach in optimal control consists in including all
necessary information in the variable~$\va{x}$ so that
variables~$\va{w}_{t_1}, \dots, \va{w}_T$ are independent through
time. At most, one has to include all the past values of the noise
variable within the variable~$\va{x}$. We hence make the following
assumption.
\begin{hyp}[Markovian setting] \label{hyp:Markov}
    Noises variables~$\va{x}_{t_0}, \va{w}_{t_1}, \dots, \va{w}_T$ are
    independent.
\end{hyp}
Using Assumption~\ref{hyp:Markov}, it is well
known~\citep[see][]{BertsekasDP} that:
\begin{itemize}
    \item there is no loss of optimality in looking for the optimal
        strategy~$\va{u}_t$ at time~$t$ as a feedback function depending
        on the state variable~$\va{x}_t$, i.e. as a (measurable) function of
        the form~$\Phi_{t_0, t}: \mathcal{X}_t \rightarrow \mathcal{U}_t$;
    \item the optimal strategies~$\Phi_{t_0, t_0}^*, \dots, \Phi_{t_0,
        T-1}^*$ can be obtained by solving the classical DP equation. Let
        $V_t(x)$ denote the optimal cost when being at time step~$t$ with state
        value~$x$, this equation reads:
        \begin{align*}
            V_{T}(x) & = K(x), \\
            V_{t}(x) & = \min_{u} \Besp{L_{t}(x,u,\va{w}_{t+1}) + V_{t+1}\big(
            f_{t}(x,u,\va{w}_{t+1}) \big)}.
        \end{align*}
\end{itemize}
We call this case the \emph{classical case}. It is clear while
inspecting the DP equation that optimal strategies~$\Phi_{t_0,
t_0}^*$, $\dots$, $\Phi_{t_0, T-1}^*$ remain optimal for the
subsequent optimization problems:
\begin{subequations} \label{eqn:PbDyn}
\begin{align}
\min_{\va{x}, \va{u}} \quad& \esper{\sum_{t=t_i}^{T-1}
L_t\left(\va{x}_t, \va{u}_t, \va{w}_{t+1}\right) +
K\left(\va{x}_T\right)}, \\
\text{s.t.} \quad& \va{x}_{t_i} \text{ given}, \\
& \va{x}_{t+1} = f_t\left(\va{x}_t, \va{u}_t, \va{w}_{t+1}\right),
\qquad \forall t=t_i, \dots, T-1, \\
& \va{u}_t \preceq \va{x}_{t_i}, \va{w}_{t_{i+1}}, \dots,
\va{w}_t, \qquad \forall t = t_i, \dots, T-1,
\end{align}
\end{subequations}
for every~$t_i=t_1, \dots, T-1$. In other words, these problems
are dynamically consistent provided the information variable at
time $t$ contains at least the state variable~$\va{x}_{t}$. While
building an analogy with properties described in the deterministic
example in~\S\ref{sec:Det}, the reader should be aware that the
case we consider here is closer to Lemma~\ref{ppty:Indpdt} than to
Lemma~\ref{ppty:RightInfo}, as we explain now in more details.

\subsection{The distributed formulation}

Another consequence of the previous DP equation for
Problem~\eqref{eqn:Psto} is that the optimal feedback functions do
not depend on the initial condition~$\va{x}_{t_0}$. The
probability law of~$\va{x}_{t_0}$ only affects the optimal cost
value, but not its arg min. In fact, we are within the same
framework as in Example~\ref{ex:Det}. Indeed,
Problem~\eqref{eqn:Psto} can be written as a deterministic
distributed optimal control problem involving the probability laws
of the state variable, the dynamics of which are given by the
so-called Fokker-Planck equation. Let us detail this last
formulation~\citep[see][]{Witsenhausen:1973}.

Let~$\Psi_t$ be the space of~$\bbR$-valued functions
on~$\mathcal{X}_t$. Denoting~$\mu_{t_0}$ the probability law of
the first stage state~$\va{x}_{t_0}$, and given feedback
laws~$\Phi_t: \mathcal{X}_t \rightarrow \mathcal{U}_t$ for every
time step~$t=t_0, \dots, T-1$, we define the
operator~$A_t^{\Phi_t}: \Psi_{t+1} \rightarrow \Psi_t$, which is
meant to integrate cost functions backwards in time,
as\footnote{We do not aim at discussing technical details
concerning integrability here. We suppose that operators we
introduce are well-defined.}:
\begin{equation*}
\left(A_t^{\Phi_t} \psi_{t+1}\right)\left(\cdot\right) \defegal
\esper{\psi_{t+1} \circ f_t\left(\cdot, \Phi_t\left(\cdot\right),
\va{w}_{t+1}\right)}.
\end{equation*}
Given a feedback function~$\Phi_t$ and a cost function~$\psi_{t+1}
\in \Psi_{t+1}$,~for every~$x \in \mathcal{X}_t$ the
value~$(A_t^{\Phi_t} \psi_{t+1})(x)$ is the expected value
of~$\psi_{t+1}(\va{x}_{t+1})$, knowing that~$\va{x}_t=x$ and that
feedback~$\Phi_t$ is used. Thanks to a duality argument, the
Fokker-Planck equation, which describes the evolution of the state
probability law (as driven by the chosen feedback laws~$\Phi_t$),
is obtained:
\begin{equation*}
    \mu_{t+1} = \left(A_t^{\Phi_t}\right)^\star \mu_t,
\end{equation*}
with~$(A_t^{\Phi_t})^\star$ being the adjoint operator
of~$A_t^{\Phi_t}$. Next we introduce the operator
$\Lambda_t^{\Phi_t} : \mathcal{X}_t \rightarrow \mathbb{R}$:
\begin{equation*}
\Lambda_t^{\Phi_t}\left(\cdot\right) \defegal
\esper{L_t\left(\cdot, \Phi_t\left(\cdot\right),
\va{w}_{t+1}\right)},
\end{equation*}
which is meant to be the expected cost at time~$t$ for each
possible state value when feedback function~$\Phi_t$ is applied.
Let us define, for every~$\psi_t \in \Psi_t$ and every probability
law~$\mu_t$ on~$\mathcal{X}_t$,~$\langle \psi_t, \mu_t \rangle$
as~$\esper{\psi_t(\va{x}_t)}$ when~$\va{x}_t$ is distributed
according to~$\mu_t$. We can now write a deterministic
infinite-dimensional optimal control problem that is equivalent to
Problem~\eqref{eqn:Psto}:
\begin{align*}
\min_{\Phi, \mu} \quad& \sum_{t=t_0}^{T-1} \left\langle
\Lambda_t^{\Phi_t}, \mu_t \right\rangle +
\left\langle K, \mu_T \right\rangle, \\
\text{s.t.} \quad& \mu_{t_0} \text{ given}, \\
& \mu_{t+1} = \left(A_t^{\Phi_t}\right)^\star \mu_t, \qquad
\forall t=t_0, \dots, T-1.
\end{align*}
\begin{remark}
An alternative formulation is:
\begin{align*}
\min_{\Phi, \psi} \quad&
\left\langle \psi_{t_{0}}, \mu_{t_{0}} \right\rangle, \\
\text{s.t.} \quad& \psi_{T}=K, \\
& \psi_{t} = A_t^{\Phi_t}\psi_{t+1}+\Lambda_t^{\Phi_t}, \qquad
\forall t=T-1,\dots,t_0 .
\end{align*}
This may be called \emph{``the backward formulation''} since the
``state'' \(\psi_{t}(\cdot)\) follows an affine dynamics which is
backward in time, with an initial-only cost function (whereas the
previous forward formulation follows a forward linear dynamics
with an integral \(+\) final cost function). Both formulations are
infinite-dimensional linear programming problems which are dual of
each other. The functions \(\mu(\cdot)\) and \(\psi(\cdot)\) are
the distributed state and/or co-state (according to which one is
considered the primal problem) of this distributed deterministic
optimal control problem of which \(\Phi\) is the distributed
control.
\end{remark}

Probability laws~$\mu_t$ are by definition positive and appear
only in a multiplicative manner in the problem. Hence we are in a
similar case as Example~\ref{ex:Det}. The main difference is
rather technical: since we here have probability laws instead of
scalars, we need to apply backwards in time interversion theorems
between expectation and minimization in order to prove that the
solution of the problem actually does not depend on the initial
condition~$\mu_{t_0}$. Indeed, suppose that~$\mu_{T-1}$ is given
at time step~$T-1$. Then the most inner optimization problem
reads:
\begin{align*}
    \min_{\Phi_{T-1}} \quad& \left\langle \Lambda_t^{\Phi_{T-1}}, \mu_{T-1}
    \right\rangle + \left\langle K, \mu_T \right\rangle, \\
    \text{s.t.} \quad& \mu_T = \left(A_{T-1}^{\Phi_{T-1}}\right)^\star \mu_{T-1},
\intertext{which is equivalent to:}
    \min_{\Phi_{T-1}} \quad& \left\langle \Lambda_t^{\Phi_{T-1}}
    + A_{T-1}^{\Phi_{T-1}} K, \mu_{T-1} \right\rangle.
\end{align*}
The point is that operators~$\Lambda_t^{\Phi_{T-1}}+
A_{T-1}^{\Phi_{T-1}} K$ and~$\mu_{T-1}$ both take values
in~$\mathcal{X}_{T-1}$ and that the minimization has to be done
``$x$ by~$x$'', so that we are in the case of Example~\ref{ex:Det}
for every~$x$. Therefore, the minimizer does not depend
on~$\mu_{T-1}$. For a rigorous proof, one needs several technical
assumptions concerning measurability, which we do not intend to
discuss in this paper~\citep[see][Theorem 14.60]{RockWets}. The
same argument applies recursively to every time step before~$T-1$
so that, at time~$t_0$, the initial condition~$\mu_{t_0}$ only
influences the optimal cost of the problem, but not the argument
of the minimum itself (here, the feedback laws~$\Phi_{t_0,t}^*$).

Hence, following Lemma~\ref{ppty:Indpdt},
Problems~\eqref{eqn:PbDyn} are naturally time consistent when
strategies are searched as feedback functions on~$\va{x}_t$ only.
It thus appears that the rather general class of stochastic
optimal control problems shaped as Problem~\eqref{eqn:Psto} is in
fact very specific. However, such a property does not remain true
when adding new ingredients in the problem, as we show in the next
subsection.

\section{Stochastic optimal control with constraints}\label{sec:Constrained}

We now give an example in which the state variable, as defined
notably by~\citet{Whittle}, cannot be reduced to
variable~$\va{x}_t$ as above. Let us make Problem~\eqref{eqn:Psto}
more complex by adding to the model a probability constraint
applying to the final time step $T$. For instance, we want the
system to be in a certain state at the final time step with a
given probability:
\begin{equation*}
    \proba{h\left(\va{x}_T\right) \geq b} \leq
    \pi .
\end{equation*}
Such chance constraints can equivalently be modelled as an
expectation constraint in the following way:
\begin{equation*}
    \esper{\findi{\left\{h\left(\va{x}_T\right) \geq b\right\}}} \leq \pi,
\end{equation*}
where~$\findi{A}$ refers to the indicator function of set~$A$.
Note however that chance constraints bring important theoretical
and numerical difficulties, notably regarding connexity and
convexity of the feasible set of controls, even in the static
case. The interested reader should refer to the work
of~\citet{PrekopaStoProg}, and to the handbook
by~\citet[Ch.5]{StochasticProgramming03} for mathematical
properties and numerical algorithms in Probabilistic
Programming~\citep[see also][for related
studies]{Henrion02,HenrionStrugarek06}. We do not discuss them
here. The difficulty we are interested in is common to both chance
and expectation constraints. This is why we concentrate in the
sequel on adding an expectation constraint to
Problem~\eqref{eqn:Psto} of the form:
\begin{equation*}
    \esper{g\left(\va{x}_T\right)} \leq a. \label{eqn:Constraint}
\end{equation*}
The reader familiar with chance constraints might want to see the
level~$a$ as a level of probability that one wants to satisfy for
a certain event at the final time step.

We now show that when adding such an expectation constraint, the
dynamic consistency property falls apart. More precisely, the
sequence of SOC problems are not time consistent anymore when
using the usual state variable. Nevertheless, we observe that the
lack of consistency comes from an inappropriate choice for the
state variable. By choosing the appropriate state variable, one
regains dynamic consistency.

\subsection{Problem setting} \label{ssec:ProblemSetting}

We now go back to the constrained formulation and introduce a
measurable function~$g: \mathcal{X}_T \rightarrow \bbR$ and~$a \in
\bbR$. We consider Problem~\eqref{eqn:Psto} with the additional
final expectation constraint:
\begin{equation*}
    \esper{g\left(\va{x}_T\right)} \leq a.
\end{equation*}
The subsequent optimization problems formulated at an initial
time~$t_{i}>t_{0}$ are naturally deduced from this problem. The
level~$a$ of the expectation constraint remains the same for every
problem. One has to be aware that this corresponds to a (naive)
modelling choice for the family of optimization problems under
consideration. Such a choice is questionable since the perception
of the constraint may evolve over time.

Suppose there exists a solution for the problem at~$t_{0}$. As
previously, we are looking for the optimal control at time~$t$ as
a feedback function~$\Phi_{t_0, t}^*$ depending on the
variable~$\va{x}_{t}$. The first index~$t_{0}$ refers to the time
step at which the problem is stated, while the second index~$t$
refers to the time step at which the decision is taken.

One has to be aware that these solutions now implicitly depend on
the initial condition~$\va{x}_{t_{0}}$. Indeed, let~$\mu_T$ be the
probability law of~$\va{x}_T$. Constraint~\eqref{eqn:Constraint}
can be written~$\left\langle g, \mu_T \right\rangle \leq a$, so
that the equivalent distributed formulation of the initial time
problem is:
\begin{align*}
\min_{\Phi, \mu} \quad
    & \sum_{t=t_0}^{T-1}
      \left\langle \Lambda_t^{\Phi_t}, \mu_t \right\rangle +
      \left\langle K, \mu_T \right\rangle, \\
\intertext{subject to the Fokker-Planck dynamics:}
    & \mu_{t+1} = \left(A_t^{\Phi_t}\right)^\star \mu_t,
      \qquad \forall t=t_0, \dots, T-1,\\
\intertext{$\mu_{t_0}$ being given by the initial condition, and
the final expectation constraint:}
    & \left\langle g, \mu_T \right\rangle \leq a.
\end{align*}
Even though this problem seems linear with respect to variables
$\mu_{t}$, the last constraint introduces an additional highly
nonlinear term in the cost function, namely:
\begin{equation*}
\fcara{\left\{\left\langle g,\mu_T \right\rangle \leq a \right\}},
\end{equation*}
where~$\fcara{A}$ stands for the characteristic
function\footnote{as defined in convex analysis: $\fcara{A}(x) =
\Big\{\begin{array}{ll} 0 & \text{if } x \in A \\ +\infty &
\text{otherwise} \end{array}$} of set~$A$. The dynamics are still
linear and variables~$\mu_t$ are still positive, but the objective
function is not linear with respect to~$\mu_T$ anymore, and
therefore not linear with respect to the initial law~$\mu_{t_0}$
either. Hence there is no reason for feedback laws to be
independent of the initial condition as in the case without
constraint presented in~\S\ref{sec:Sto}.

Let us now make a remark on this initial condition. Since the
information structure is such that the state variable is fully
observed, the initial condition is in fact of a deterministic
nature:
\begin{equation*}
\va{x}_{t_{0}} = x_{t_{0}},
\end{equation*}
where~$x_{t_{0}}$ is a given (observed) value of the system state.
The probability law of~$\va{x}_{t_{0}}$ is accordingly the Dirac
function~$\delta_{x_{t_{0}}}$.\footnote{The initial
law~$\mu_{t_{0}}$ in Problem~\eqref{eqn:Psto} corresponds to the
information available on~$\va{x}_{t_{0}}$ \emph{before}
$\va{x}_{t_{0}}$ is observed, but it seems more reasonable in a
practical situation to use all the available information when
setting the problem again at each new initial time, and thus to
use a Dirac function as the initial condition.} The reasoning made
for the problem initiated at time~$t_{0}$ remains true for the
subsequent problems starting at time~$t_{i}$: an
observation~$x_{t_{i}}$ of the state variable~$\va{x}_{t_{i}}$
becomes available before solving Problem~\eqref{eqn:PbDyn}, so
that its natural initial condition is in fact:
\begin{equation*}
\va{x}_{t_{i}} = x_{t_{i}}.
\end{equation*}
Otherwise stated, the initial state probability law in each
optimization problem we consider should correspond to a Dirac
function. Note that such a sequence of Dirac functions is not
driven by the Fokker-Planck equation, but is in fact associated to
some dynamics of the degenerate filter corresponding to this
perfect observation scheme. In the sequel, we assume such an
initial condition for every problem we consider.

Now, according to Lemma~\ref{ppty:Det}, the subsequent
optimization problems formulated at time~$t_{i}$ will be
dynamically consistent provided their initial conditions are given
by the optimal Fokker-Planck equation:
\begin{equation*}
\mu_{t_0, t_{i}}^* =
\Big(A_{t_{i-1}}^{\Phi_{t_{0},t_{i-1}}^*}\Big)^\star \ldots
\Big(A_{t_{0}}^{\Phi_{t_{0},t_{0}}^*}\Big)^\star \mu_{t_0}.
\end{equation*}
However, except for noise free problems, such a probability
law~$\mu_{t_0, t_{i}}^*$ is always different from a Dirac
function, which is, as already explained, the natural initial
condition for the subsequent problem starting at time~$t_{i}$. As
a conclusion, the sequence of problems is not time consistent as
long as we consider feedback laws~$\Phi_{t}$ depending
on~$\va{x}_{t}$ only.

\begin{remark}[Joint probability constraints] \label{rem:PrbtConstraints}
Rather than~$\proba{g\left(\va{x}_T\right) \geq b} \leq a$, let us
consider a more general chance constraint of the form:
\begin{equation*}
\proba{g_t\left(\va{x}_t\right) \geq b_t, \forall t=t_1, \dots, T}
\leq a.
\end{equation*}
This last constraint can be modelled, like the previous one,
through an expectation constraint by introducing a new binary
state variable:
\begin{align*}
\va{y}_{t_0} &= 1, \\
\va{y}_{t+1} &= \va{y}_t \times
\findi{\left\{g_{t+1}\left(\va{x}_{t+1}\right) \geq
b_{t+1}\right\}}, \qquad \forall t=t_0, \dots, T-1,
\end{align*}
and considering constraint~$\esper{\va{y}_T} \leq a$.
\hfill$\square$
\end{remark}

\subsection{Back to time consistency}

We now show that time consistency can be recovered provided we
choose the right state variable on which to base decisions. We
hence establish a link between time consistency of a family of
optimization problems and the notion of state variable.

We claim that a better-suited state variable for the family of
problems with final time expectation constraint introduced above
is the probability law of the variable~$\va{x}$. Let us denote
by~$V_t(\mu_t)$ the optimal cost of the problem starting at
time~$t$ with initial condition~$\mu_t$. Using notations of the
distributed formulation of a SOC problem, one can write a DP
equation depending on the probability laws~$\mu$ on~$\mathcal{X}$:
\begin{align*}
V_T\left(\mu\right)
    & = \left\{ \begin{array}{ll}
           \left\langle K, \mu \right\rangle &
           \text{ if $\left\langle g, \mu \right\rangle \leq a$}, \\
           +\infty & \text{ otherwise},
        \end{array} \right. \\
\intertext{and, for every~$t=t_0, \dots, T-1$ and every
probability law~$\mu$ on~$\mathcal{X}$:} V_t\left(\mu\right)
    &=\min_{\Phi_t} \left\langle \Lambda_t^{\Phi_t}, \mu \right\rangle +
     V_{t+1}\left(\left(A_t^{\Phi_t}\right)^\star \mu\right).
\end{align*}
The context is similar to the one of the deterministic example
of~\S\ref{sec:Det}, and Lemma~\ref{ppty:RightInfo} states that
solving the deterministic infinite-dimensional problem associated
with the constrained problem leads to time consistency provided DP
is used. For the problem under consideration, we thus obtain
optimal feedback functions~$\Phi_{t}$ which depend on the
probability laws $\mu_{t}$. Otherwise stated, the family of
constrained problems introduced in~S\ref{ssec:ProblemSetting} is
time consistent provided one looks for strategies as feedback
functions depending on both the variable~$\va{x}_{t}$ and the
probability law of~$\va{x}_{t}$.

Naturally, this DP equation is rather conceptual. The resolution
of such an equation is intractable in practice since probability
laws~$\mu_t$ are infinite-dimensional objects.

\section{Conclusion}

We informally introduced a notion of time consistency of a
sequence of decision-making problems, which basically requires
that plans that are made from the very first time remain optimal
if one rewrites optimization problems at subsequent time steps. We
show that, for several classes of optimal control problems, this
concept is not new and can be directly linked with the notion of
state variable, which is the minimal information one must use to
be able to take the optimal decision.

We show that, in general, feedback laws have to depend on the
probability law of the usual state variable for Stochastic Optimal
Control problems to be time consistent. This is necessary, for
example, when the model contains expectation or chance
constraints.

Future works will focus on three main directions. The first
concern will be to better formalize the state notion in the vein
of the works by~\citet{Witsenhausen:1971a,Witsenhausen:1973}
and~\citet{Whittle}. The second will be to establish the link with
the literature concerning risk measures, in particular the work
by~\citet{Ruszczynski_OO_2009}. Finally, the last DP equations we
introduced are in general intractable. In a forthcoming paper, we
will provide a way to get back to a finite-dimensional information
variable, which makes a resolution by DP tractable.

\bibliographystyle{spmpsci}

\end{document}